\newtheorem{theorem}{Theorem}[section]
\newtheorem{corollary}{Corollary}[section]
\newtheorem{lemma}{Lemma}[section]
\newtheorem{proposition}{Proposition}[section]
\theoremstyle{remark}
\newtheorem{definition}{Definition}
\begin{document}

\title{The universal semigroup of a $\Gamma$-semigroup}
\author{Elton Pasku\\
elton.pasku@fshn.edu.al}
\date{}

\maketitle

\begin{abstract}
Given a $\Gamma$-semigroup $S$, we construct a semigroup $\Sigma$ in such a way that one sided ideals and quasi-ideals of $S$ can be regarded as one sided ideals and quasi-ideals respectively of $\Sigma$. This correspondence and other properties of $\Sigma$, allow us to obtain several results for $S$ without having the need to work directly with it, but solely employing well known results of semigroup theory. For example, we obtain the Green's theorem for $\Gamma$-semigroups found in \cite{PT}, as a corollary of the usual Green's theorem in semigroups. Also we prove that, if $S$ is a $\Gamma$-semigroup and $\gamma_{0} \in \Gamma$ such that $S_{\gamma_{0}}$ is a completely simple semigroup, then for every $\gamma \in \Gamma$, $S_{\gamma}$ is completely simple too.
\end{abstract}

\section*{Introduction}

Let $S$ and $\Gamma$ be two non empty sets. Every map from $S \times \Gamma \times S$ to $S$ will be called a $\Gamma$-multiplication in $S$ and is denoted by $(\cdot)_{\Gamma}$. The result of this multiplication for $a,b \in S$ and $\gamma \in \Gamma$ is denoted by $a\gamma b$. According to Sen and Saha \cite{GS-I}, a $\Gamma$-semigroup $S$ is an ordered pair $(S,(\cdot)_{\Gamma})$ where $S$ and $\Gamma$ are non empty sets and $(\cdot)_{\Gamma}$ is a $\Gamma$-multiplication on $S$ which satisfies the following property
\begin{align*}
\forall (a,b,c,\alpha, \beta) \in S^{3} \times \Gamma^{2}, (a \alpha b)\beta c= a \alpha (b \beta c).
\end{align*}
Here we give a few notions and present some auxiliary results that will be used throughout the paper. Some of the results may be found in \cite{saha} and \cite{GS-I} but for the reader's convenience we have included their proofs here.

Let $S$ be a $\Gamma$-semigroup and $A,B$ subset of $S$. We define the set
\begin{equation*}
A \Gamma B =\{a \gamma b | a,b \in S \text{ and } \gamma \in \Gamma \}.
\end{equation*}
For simplicity we write $a \Gamma B$ instead of $\{a\} \Gamma B$ and similarly we write $A \Gamma b$, and write $A \gamma B$ in place of $A \{\gamma\} B$.
\begin{definition} \cite{saha}
Let $S$ be a $\Gamma$-semigroup. A non empty subset $S_{1}$ of $S$ is said to be a $\Gamma$-subsemigroup of $S$ if $S_{1} \Gamma S_{1} \subseteq S_{1}$.
\end{definition}
\begin{definition} \cite{GS-I}
A right [left] ideal of a $\Gamma$-semigroup $S$ is a non empty subset $R [L]$ of $S$ such that $R \Gamma S \subseteq R$, $[S \Gamma L \subseteq L]$.
\end{definition}
By analogy with the definition of quasi-dieals in plain semigroups \cite{OSF} we give the following
\begin{definition}
A quasi-ideal of a $\Gamma$-semigroup $S$ is a non empty subset $Q$ of $S$ such that $Q \Gamma S \cap S \Gamma Q \subseteq Q$.
\end{definition}

\section{The semigroup $\Sigma$}

We define $\Sigma$ as the quotient semigroup of the free semigroup $F$ on the set $S \cup \Gamma$ by the congruence generated from the set of ralations
\begin{equation*}
(\gamma_{1},\gamma_{2}) \sim \gamma_{1}, (x, \gamma, y) \sim x \gamma y, (x,y) \sim x \gamma_{0}y
\end{equation*}
for all $\gamma_{1}, \gamma_{2} \text { and } \gamma \in \Gamma$, all $x,y \in S$ and with $\gamma_{0} \in \Gamma$ a fixed element.
\begin{lemma} \label{sigma}
Every element of $\Sigma$ can be represented by an irreducible word which has the form $\gamma x \gamma'$, $\gamma x$, $x \gamma$, $\gamma$ or $x$ where $x \in S$ and $\gamma, \gamma' \in \Gamma$.
\end{lemma}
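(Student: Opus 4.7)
The plan is to treat each of the three defining relations as a length-strictly-decreasing rewrite rule on the free semigroup $F$ on $S\cup\Gamma$, and to show that the irreducible words under these rules are precisely those of the five forms listed.

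First I would reduce an arbitrary word $w\in F$. Whenever $w$ contains two consecutive letters from $\Gamma$, I would replace them by the first one (an instance of $(\gamma_{1},\gamma_{2})\sim\gamma_{1}$); whenever $w$ contains two consecutive letters $x,y$ from $S$, I would replace them by the single letter $x\gamma_{0}y\in S$ (using $(x,y)\sim x\gamma_{0}y$); and whenever $w$ contains a subword of the form $x\gamma y$ with $x,y\in S$ and $\gamma\in\Gamma$, I would replace it by the single letter $x\gamma y\in S$ (using $(x,\gamma,y)\sim x\gamma y$). Each such rewrite strictly shortens the word and leaves the $\Sigma$-class unchanged, so iteration terminates at some irreducible word $w^{\ast}$ representing the same element of $\Sigma$ as $w$.

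Next I would characterise the shape of $w^{\ast}$. Irreducibility means that $w^{\ast}$ has (i) no two consecutive letters from $\Gamma$, (ii) no two consecutive letters from $S$, and (iii) no subword $x\gamma y$ with $x,y\in S$, $\gamma\in\Gamma$. Conditions (i) and (ii) together force $w^{\ast}$ to alternate strictly between letters of $S$ and letters of $\Gamma$. In such an alternating word, any two positions holding letters of $S$ are separated by an even number of positions, so some pair of $S$-letters is separated by exactly one $\Gamma$-letter, which produces a forbidden subword of type (iii). Hence $w^{\ast}$ contains at most one letter from $S$.

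A short enumeration then completes the argument. If $w^{\ast}$ contains no letter from $S$, then by non-emptiness and (i) it is a single $\gamma\in\Gamma$. If $w^{\ast}$ contains exactly one letter $x\in S$, then by alternation the $\Gamma$-letters (if any) can only be placed immediately adjacent to $x$, yielding one of the four forms $x$, $\gamma x$, $x\gamma$, or $\gamma x\gamma'$. The only real obstacle is the verification that alternation together with condition (iii) forces at most one $S$-letter; the termination by strict length decrease and the final case split are essentially bookkeeping.
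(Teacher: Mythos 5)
Your argument is correct for the statement as literally worded, and it runs on the same machinery as the paper's proof: orient the three relations as length\-/decreasing rewrite rules, get termination for free, and then classify the irreducible words. Your classification step is in fact more explicit than the paper's, which merely asserts that the five forms are ``easily'' seen to be the only irreducible words; your observation that alternation plus the prohibition of $x\gamma y$ forces at most one $S$\-/letter is exactly the missing detail, and it is sound. The substantive difference is that the paper also verifies local confluence of the rewriting system (via Newman's lemma, resolving five overlap ambiguities such as $(x,y,\gamma,z)$ and $(x,\gamma,y,\gamma',z)$), and hence obtains \emph{uniqueness} of the irreducible representative, not just existence. The lemma's wording does not demand uniqueness, so your proof is not wrong; but the paper leans on uniqueness immediately afterwards --- to conclude that $\mu\colon F\to\Sigma$ is injective on $S$ and on $\Gamma$, to justify identifying each element of $\Sigma$ with \emph{the} irreducible word representing it, and later to assert that $\Sigma\setminus\Gamma$ is a \emph{disjoint} union of the pieces $S$, $\gamma S$, $S\gamma$, $\gamma S\gamma'$. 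Without confluence, two distinct words of your five forms could a priori represent the same element of $\Sigma$, and all of those downstream claims would be unsupported. So if you intend your proof to replace the paper's, you should add the confluence check (or some other argument for uniqueness of normal forms); as a proof of the bare representability claim it is complete.
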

\begin{proof}
To prove the lemma, we show first that the reduction system arising from the given presentation is Noetherian and confluent, and therefore any element of $\Sigma$ is given by a unique irreducible word from $F$. Secondly, we show that the irreducible words have one of the five claimed forms. \newline
If a word $w$ of $F$ has the form $(u, \gamma_{1},\gamma_{2},v)$ where $\gamma_{1},\gamma_{2} \in \Gamma$, and $u,v$ are possibly empty words of $F$, then $w$ reduces to $w'=(u, \gamma_{1},v)$. Now if for some $x,y \in S$ and $\gamma \in \Gamma$, the word $w$ contains a subword of the form $(x,\gamma, y)$, which is to say that $w=(u, x,\gamma, y, v)$ with $u,v$ being possibly empty words from $F$, then it reduces to $w'=(u, x\gamma y, v)$. Finally, if the word $w$ contains two adjacent letters from $S$, meaning that $w=(u, x, y, v)$ where $u$ and $v$ as before and $x,y \in S$, then it reduces to $w'=(u, x \gamma_{0} y, v)$. In this way we obtain a reduction system made of the following three type of reductions:
\begin{equation*} 
\left.\begin{array}{ccc} (u,  \gamma_{1},\gamma_{2},v) & \rightarrow & (u,\gamma_{1},v) \\
(u, x,\gamma, y, v)  & \rightarrow & (u, x\gamma y, v) \\
(u, x, y, v) & \rightarrow & (u, x \gamma_{0} y, v) \end{array}\right.
\end{equation*}
which is length reducing and therefore Noetherian. To prove that it is confluent, from Newman's lemma, it is sufficient to show that it is locally confluent. 
As there are no inclusion ambiguities, we need to check only overlapping ones. There are only five such pairs: \newline
1- $(x,y,\gamma,z) \rightarrow (x \gamma_{0}y,\gamma, z)$ and $(x,y,\gamma,z) \rightarrow (x , y\gamma z)$. Both resolve to $(x \gamma_{0}y\gamma z)$. \newline
2- $(x, \gamma, y,z) \rightarrow (x, \gamma, y \gamma_{0} z)$ and $(x, \gamma, y,z) \rightarrow (x \gamma y,  z)$ which resolve to $(x \gamma y \gamma_{0} z)$. \newline
3- $(x, \gamma, y, \gamma',z) \rightarrow (x, \gamma, y \gamma' z)$ and $(x, \gamma, y, \gamma',z) \rightarrow (x \gamma y, \gamma', z)$ which resolve to $(x \gamma y \gamma' z)$. \newline
4- $(x,y,z) \rightarrow (x \gamma_{0}y, z)$ and $(x,y,z) \rightarrow (x,y \gamma_{0}z)$, which resolve to $(x \gamma_{0} y \gamma_{0}z)$. \newline
5- $(\gamma_{1},\gamma_{2},\gamma_{3}) \rightarrow (\gamma_{1},\gamma_{3})$ and $(\gamma_{1},\gamma_{2},\gamma_{3}) \rightarrow (\gamma_{1},\gamma_{2})$ which resolve to $(\gamma_{1})$. \newline
To complete the proof, we need to show that the irreducible words representing elements of $\Sigma$ have the forms claimed. This can be achieved easily, by using an inductive argument on the length of the word or by simply observing that those five type of words are the only irreducible words in $F$.
\end{proof}
Lemma \ref{sigma} shows in particular that the natural epimorphism $\mu: F \rightarrow \Sigma$ is injective on $S$ and $\Gamma$. In what follows we will identify the elements of $\Sigma$ with the irreducible words from $F$ they are represented of written without brackets and commas, and if we want to multiply in $\Sigma$ two such words, we take their concatenation and than find its irreducible form. For instance, the product in $\Sigma$ of $x$ with $\gamma y$ is $x \cdot \gamma y=x \gamma y$.

We call $\Sigma$ the \textit{universal semigroup} of the given $\Gamma$-semigroup. The reason for this steams form the following universal property that $\Sigma$ possesses. 
\begin{theorem}
Let $S$ and $S'$ be both $\Gamma$-semigroups. For every homomorphism of $\Gamma$-semigroups $\varphi: S\rightarrow S'$ identical on $\Gamma$, there is a unique homomorphism of semigroups $\phi: \Sigma \rightarrow \Sigma'$ identical on $\Gamma$ and such that $\phi \mu=\mu' \varphi$.
\end{theorem}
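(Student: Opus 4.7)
The plan is to build $\phi$ by first extending $\varphi$ through the universal property of the free semigroup $F$ on $S \cup \Gamma$, and then showing that this extension descends to the quotient $\Sigma$. Let $F'$ denote the free semigroup on $S' \cup \Gamma$ and $\mu': F' \to \Sigma'$ the associated projection, so that $\Sigma'$ is given by the same presentation as $\Sigma$ with $S$ replaced by $S'$ (and built from the same distinguished element $\gamma_{0} \in \Gamma$; I would make this compatibility explicit up front). First I would define the map on generators $S \cup \Gamma \to S' \cup \Gamma$ by $x \mapsto \varphi(x)$ for $x \in S$ and $\gamma \mapsto \gamma$ for $\gamma \in \Gamma$, extend it by freeness of $F$ to a semigroup homomorphism $\widetilde{\varphi}: F \to F'$, and compose with $\mu'$ to obtain $\Phi := \mu' \widetilde{\varphi}: F \to \Sigma'$. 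To produce $\phi$, it then suffices to verify that $\Phi$ respects the three families of defining relations of $\Sigma$, for then $\Phi$ factors uniquely through $\mu$ as a semigroup homomorphism $\phi: \Sigma \to \Sigma'$.

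The three relation checks are short computations. For $(\gamma_{1},\gamma_{2}) \sim \gamma_{1}$ the identity $\Phi(\gamma_{1}\gamma_{2}) = \Phi(\gamma_{1})$ is inherited from the parallel defining relation in $\Sigma'$. For $(x,\gamma,y) \sim x\gamma y$, one side of $\Phi$ gives $\varphi(x)\gamma\varphi(y)$ in $\Sigma'$ by the second defining relation there, while the other gives $\varphi(x\gamma y)$, and these agree precisely because $\varphi$ is a $\Gamma$-homomorphism. The third relation $(x,y) \sim x\gamma_{0}y$ is handled the same way, combining the third defining relation of $\Sigma'$ with the $\Gamma$-homomorphism property of $\varphi$. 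Once $\phi$ exists, it is automatically identical on $\Gamma$: $\phi(\gamma) = \phi(\mu(\gamma)) = \mu'(\widetilde{\varphi}(\gamma)) = \mu'(\gamma) = \gamma$.

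For uniqueness, any homomorphism $\phi': \Sigma \to \Sigma'$ satisfying the same two conditions must agree with $\phi$ on the set $S \cup \Gamma$, and by Lemma \ref{sigma} every element of $\Sigma$ is a product of at most three such generators, so $\phi'$ is forced to coincide with $\phi$ throughout. The main subtlety, rather than a deep obstacle, is the check for the third relation, because it tacitly depends on $\Sigma$ and $\Sigma'$ being built from a common $\gamma_{0}$; once that compatibility is noted, all three relation checks are one-line verifications blending the corresponding defining relation in $\Sigma'$ with the $\Gamma$-homomorphism property of $\varphi$.
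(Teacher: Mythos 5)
Your proposal is correct and follows essentially the same route as the paper: extend $\varphi$ to the free semigroup, verify that the three defining relations are respected (using the $\Gamma$-homomorphism property for the second and third), and factor through the quotient, with uniqueness coming from the fact that $\Sigma$ is generated by the images of $S \cup \Gamma$. Your explicit remarks about the shared $\gamma_{0}$ and the generation argument for uniqueness only make precise what the paper leaves as ``obvious.''
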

\begin{proof}
Let $f: F(S \cup \Gamma) \rightarrow F(S' \cup \Gamma)$ be the homomorphism of free semigroups induced from $\varphi$. We prove that $\varphi$ induces a homomorphism $\phi:\Sigma \rightarrow \Sigma'$. To do this we need to show that every relation that defines $\Sigma$ lies in the kernel of $\nu'f$ where $\nu': F(S' \cup \Gamma) \rightarrow \Sigma'$ is the canonical homomorphism. Indeed, for the first type of relations $(\gamma_{1},\gamma_{2}) \sim \gamma_{1}$ we have
\begin{align*}
\nu'f(\gamma_{1},\gamma_{2})&=\nu'(\varphi(\gamma_{1}),\varphi(\gamma_{2}))\\
&=\varphi(\gamma_{1})\\
&=\gamma_{1}\\
&=\nu'f(\gamma_{1}).
\end{align*}
For the second type $(x, \gamma, y) \sim x \gamma y$ we have
\begin{align*}
\nu'f(x,\gamma,y)&=\nu'(\varphi(x),\gamma,\varphi(y))\\
&=\varphi(x)\gamma \varphi(y)\\
&=\varphi(x \gamma y)\\
&=\nu'f(x\gamma y),
\end{align*}
and for the last type $(x,y) \sim x \gamma_{0}y$ we have
\begin{align*}
\nu'f(x,y)&=\nu'(\varphi(x),\varphi(y))\\
&=\varphi(x)\gamma_{0} \varphi(y)\\
&=\varphi(x \gamma_{0} y)\\
&=\nu'f(x\gamma_{0} y).
\end{align*}
The equality $\phi \mu=\mu' \varphi$ is obvious and the uniqueness of $\phi$ with the given property follows easily.
\end{proof}
The next two propositions and the subsequent corollary establish a 1-1 correspondence between one sided ideals and quasi ideals of $S$, and their counterparts of $\Sigma$. This correspondence will be useful in the proof of theorem \ref{GT}.
\begin{proposition}
For every $x \in S$, the left ideal in $\Sigma$ generated by $x$ is the set $(x)_{\ell}^{\Sigma}=(x)_{\ell}^{\Gamma} \cup \Gamma (x)_{\ell}^{\Gamma}$ where $(x)_{\ell}^{\Gamma}=S \Gamma x \cup \{x\}$ is the left ideal in $S$ generated by $x$ and $\Gamma (x)_{\ell}^{\Gamma}$ is a short notation for the set $\{ \gamma y: \gamma \in \Gamma \text{ and } y \in (x)_{\ell}^{\Gamma} \}$.
\end{proposition}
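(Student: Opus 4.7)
The plan is to unwind what the left ideal generated by $x$ in $\Sigma$ looks like, using the normal form description from Lemma \ref{sigma}. Since $\Sigma$ need not be a monoid, the left ideal generated by $x$ is $(x)_{\ell}^{\Sigma} = \{x\} \cup \Sigma x$. So the task splits naturally into computing $\Sigma x$ explicitly and then comparing the resulting set with the right-hand side of the claimed equality.

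First I would handle the inclusion $\{x\} \cup \Sigma x \subseteq (x)_{\ell}^{\Gamma} \cup \Gamma (x)_{\ell}^{\Gamma}$. By Lemma \ref{sigma}, every $w \in \Sigma$ has one of the five forms $y$, $\gamma$, $y\gamma$, $\gamma y$, or $\gamma y \gamma'$ with $y \in S$ and $\gamma, \gamma' \in \Gamma$. Multiplying each case on the right by $x$ and reducing via the rewriting rules recorded in the proof of Lemma \ref{sigma} gives, respectively, $y \gamma_0 x$, $\gamma x$, $y \gamma x$, $\gamma(y \gamma_0 x)$, and $\gamma(y \gamma' x)$. The first and third sit inside $S \Gamma x \subseteq (x)_{\ell}^{\Gamma}$; the second lies in $\Gamma \{x\} \subseteq \Gamma (x)_{\ell}^{\Gamma}$; the last two lie in $\Gamma \cdot S \Gamma x \subseteq \Gamma (x)_{\ell}^{\Gamma}$. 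Together with the element $x$ itself, this exhausts all possibilities.

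For the reverse inclusion $(x)_{\ell}^{\Gamma} \cup \Gamma (x)_{\ell}^{\Gamma} \subseteq \{x\} \cup \Sigma x$, I would check the four kinds of generators of the right-hand side: $x$ is in the set trivially; an element $y \gamma x \in S \Gamma x$ arises as the product $(y\gamma) \cdot x$ with $y\gamma \in \Sigma$; an element $\gamma x \in \Gamma \{x\}$ arises as $\gamma \cdot x$; and $\gamma y \gamma' x \in \Gamma \cdot S \Gamma x$ arises as $(\gamma y \gamma') \cdot x$. Each of these products is an element of $\Sigma x$ by construction.

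The only genuine subtlety is making sure the case analysis in the forward direction is truly exhaustive and that the reductions are applied correctly near the boundary between $S$ and $\Gamma$ letters; but since Lemma \ref{sigma} pins down the normal form and the rewriting rules are explicit, this is a mechanical verification. I would not expect any obstacle beyond careful bookkeeping across the five normal-form cases.
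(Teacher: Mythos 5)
Your proposal is correct and follows essentially the same route as the paper: a case analysis on the five normal forms of Lemma \ref{sigma}, multiplying each on the right by $x$ and reducing to see that the product lands in $(x)_{\ell}^{\Gamma} \cup \Gamma (x)_{\ell}^{\Gamma}$. The only difference is that you also spell out the easy reverse inclusion, which the paper leaves implicit.
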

\begin{proof}
The elements of $(x)_{\ell}^{\Sigma} \setminus \{x\}$ are of the following five forms: \newline
1- $\gamma y \cdot x$ with $\gamma \in \Gamma$ and $y \in S$. But $\gamma y \cdot x=\gamma y \gamma_{0} x$ which belongs to $\Gamma (x)_{\ell}^{\Gamma}$. \newline
2- $\gamma y \gamma' \cdot x$ with $\gamma, \gamma' \in \Gamma$ and $y \in S$. Again $\gamma y \gamma' \cdot x=\gamma (y \gamma'  x)$ which belongs to $\Gamma (x)_{\ell}^{\Gamma}$. \newline
3- $\gamma \cdot x$ with $\gamma \in \Gamma$ which obviously belongs to $\Gamma (x)_{\ell}^{\Gamma}$. \newline
4- $y \cdot x$ which equals to $y \gamma_{0} x$ and belongs to $(x)_{\ell}^{\Gamma}$. \newline
5- $y \gamma \cdot x$ which equals to $y \gamma x$ and as before belongs to $(x)_{\ell}^{\Gamma}$.
\end{proof}
The analogue of the above for right ideals also holds true.
\begin{proposition}
For every $x \in S$, the right ideal in $\Sigma$ generated by $x$ is the set $(x)_{r}^{\Sigma}=(x)_{r}^{\Gamma} \cup (x)_{r}^{\Gamma} \Gamma$ where $(x)_{r}^{\Gamma}=x \Gamma S \cup \{x\}$ is the right ideal in $S$ generated by $x$ and $(x)_{r}^{\Gamma} \Gamma$ is the short notation for the set $\{ y \gamma : \gamma \in \Gamma \text{ and } y \in (x)_{r}^{\Gamma} \}$.
\end{proposition}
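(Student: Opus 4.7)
The plan is to mirror the argument of the preceding proposition, replacing left multiplication by $x$ with right multiplication by $x$. Since the right ideal in $\Sigma$ generated by $x$ is $(x)_{r}^{\Sigma}=\{x\} \cup x \cdot \Sigma$, I would enumerate the elements of $x \cdot \Sigma$ by invoking Lemma \ref{sigma}, which tells us that every element $w$ of $\Sigma$ has one of five canonical forms, namely $\gamma y \gamma'$, $\gamma y$, $y \gamma$, $\gamma$, or $y$. Thus the nontrivial elements of $(x)_{r}^{\Sigma}$ split into five cases, obtained by concatenating $x$ with each of these canonical forms and then reducing to the unique irreducible form guaranteed by Lemma \ref{sigma}.

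In each case I would simply apply the appropriate reduction rule. For $x \cdot \gamma y \gamma'$ the concatenation $(x,\gamma,y,\gamma')$ reduces via rule 2 to $(x \gamma y, \gamma')$, which lies in $(x)_{r}^{\Gamma} \Gamma$ since $x \gamma y \in x \Gamma S \subseteq (x)_{r}^{\Gamma}$. For $x \cdot \gamma y$ we get directly $x \gamma y \in (x)_{r}^{\Gamma}$. For $x \cdot y \gamma$ the concatenation $(x,y,\gamma)$ reduces via rule 3 to $(x \gamma_{0} y, \gamma)$, which sits in $(x)_{r}^{\Gamma} \Gamma$. For $x \cdot \gamma$ we obtain $x \gamma \in (x)_{r}^{\Gamma} \Gamma$. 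Finally $x \cdot y$ reduces to $x \gamma_{0} y \in (x)_{r}^{\Gamma}$. This shows the inclusion $(x)_{r}^{\Sigma} \subseteq (x)_{r}^{\Gamma} \cup (x)_{r}^{\Gamma} \Gamma$.

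For the reverse inclusion, which is usually left implicit in such arguments, I would note that every element of $(x)_{r}^{\Gamma}$ is either $x$ itself or of the form $x \gamma s$ with $s \in S$, $\gamma \in \Gamma$, and hence visibly belongs to $\{x\} \cup x \cdot \Sigma = (x)_{r}^{\Sigma}$; similarly every element $z \gamma$ with $z \in (x)_{r}^{\Gamma}$ equals either $x \gamma$ or $(x \gamma' s) \gamma$, both of which are obtained as products in $\Sigma$ by $x$ on the left.

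The main potential obstacle is purely bookkeeping: one must be careful, in each of the five cases, to choose the correct reduction from the system in the proof of Lemma \ref{sigma}, and to confirm that the resulting irreducible word genuinely falls into $(x)_{r}^{\Gamma}$ or $(x)_{r}^{\Gamma} \Gamma$ rather than some other shape. Since the symmetry with the left-ideal proposition is perfect, no new conceptual difficulty arises.
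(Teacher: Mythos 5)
Your proof is correct and follows exactly the route the paper intends: the paper omits the proof of this proposition, presenting it as the mirror image of the left-ideal case, whose proof is precisely your five-case enumeration of the products $x \cdot w$ over the canonical forms of Lemma \ref{sigma}. Your explicit treatment of the reverse inclusion is a small but welcome addition that the paper leaves implicit.
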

\begin{corollary}
For every $x \in S$, the quasi ideal in $\Sigma$ generated by $x$ is the set $(x)_{q}^{\Sigma}=(x)_{\ell}^{\Gamma} \cap (x)_{r}^{\Gamma}=(x)_{q}^{\Gamma}$.
\end{corollary}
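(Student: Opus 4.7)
The plan is to reduce everything to the formulas given by the two preceding propositions and then use the uniqueness of irreducible forms furnished by Lemma \ref{sigma} to kill cross-terms. First I would invoke the standard semigroup fact that the principal quasi-ideal equals the intersection of the principal left and right ideals, namely
\[
(x)_q^{\Sigma} = (x)_\ell^{\Sigma} \cap (x)_r^{\Sigma}.
\]
Substituting the explicit descriptions of $(x)_\ell^{\Sigma}$ and $(x)_r^{\Sigma}$ from the two preceding propositions and distributing the intersection yields a union of four pairwise intersections.

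The second step is a shape-counting argument based on Lemma \ref{sigma}. As irreducible words of $\Sigma$, the elements of $(x)_\ell^{\Gamma}$ and of $(x)_r^{\Gamma}$ are single letters of $S$, the elements of $\Gamma (x)_\ell^{\Gamma}$ have the shape $\gamma x$, and the elements of $(x)_r^{\Gamma}\Gamma$ have the shape $x\gamma$. Since Lemma \ref{sigma} guarantees that each element of $\Sigma$ has a unique irreducible representative, three of the four intersections in the distributive expansion must be empty, and only the ``diagonal'' piece $(x)_\ell^{\Gamma} \cap (x)_r^{\Gamma}$ survives. This proves the first equality $(x)_q^{\Sigma} = (x)_\ell^{\Gamma} \cap (x)_r^{\Gamma}$ of the corollary.

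For the second equality I would verify, now working inside $S$, that $Q := (x)_\ell^{\Gamma} \cap (x)_r^{\Gamma}$ is the smallest quasi-ideal of $S$ containing $x$: it is a quasi-ideal because $Q\Gamma S \subseteq (x)_r^{\Gamma}$ and $S\Gamma Q \subseteq (x)_\ell^{\Gamma}$, so that $Q\Gamma S \cap S\Gamma Q \subseteq Q$, and it sits inside any quasi-ideal $Q' \ni x$ because every element of $S\Gamma x \cap x\Gamma S$ is forced into $Q'\Gamma S \cap S\Gamma Q' \subseteq Q'$ by the defining condition. The only non-trivial step in the whole argument is the shape-counting in the second step; this is where Lemma \ref{sigma} is used essentially rather than decoratively, and once one has it the three cross-term cancellations follow from the single observation that a letter of $S$, a word of shape $\gamma x$, and a word of shape $x\gamma$ are pairwise distinct irreducible forms.
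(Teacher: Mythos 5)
Your proof is correct and follows the route the paper clearly intends for this corollary (which it leaves unproved): apply Steinfeld's identity $(x)_q^{\Sigma}=(x)_\ell^{\Sigma}\cap(x)_r^{\Sigma}$, substitute the descriptions from the two propositions, discard the cross-terms via the uniqueness of irreducible forms from Lemma \ref{sigma}, and then check directly in $S$ that $(x)_\ell^{\Gamma}\cap(x)_r^{\Gamma}$ is the principal quasi-ideal $(x)_q^{\Gamma}$. All steps are sound, so no further comment is needed.
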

Lemma 1.3 of \cite{PT} (which can be derived from our own approach as well) and the above corollary imply that for every $x \in S$, $\mathcal{H}_{x}^{\Sigma}=\mathcal{H}_{x}^{\Gamma}$. This remark together with the Green's theorem in plain semigroups allow us to find a shorter proof of Theorem 2.1 of \cite{PT} which is the $\Gamma$-semigroup version of the Green theorem. 
\begin{theorem} \label{GT} (Green Theorem)
Suppose that $x,y$ and $x \gamma_{0} y$ for a certain $\gamma_{0} \in \Gamma$ belong to the same class $\mathcal{H}^{\Gamma}_{x}$. Then, $\mathcal{H}^{\Gamma}_{x}$ is a subgroup of the semigroup $S_{\gamma_{0}}$.
\end{theorem}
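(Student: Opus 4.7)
The plan is to lift the statement to the universal semigroup $\Sigma$ and then pull the classical Green's theorem back down. The key observation is that, by the remark preceding the theorem, $\mathcal{H}^{\Gamma}_{x} = \mathcal{H}^{\Sigma}_{x}$, so it suffices to understand the $\mathcal{H}$-class of $x$ inside the plain semigroup $\Sigma$.

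First I would translate the hypothesis into a statement about $\Sigma$. By Lemma \ref{sigma} and the description of multiplication in $\Sigma$ given after it, the product $x \cdot y$ of the letters $x,y \in S$ inside $\Sigma$ is precisely the irreducible word $x\gamma_{0}y$. Thus the assumption that $x,y,x\gamma_{0}y$ all lie in $\mathcal{H}^{\Gamma}_{x}$ is exactly the assumption that $x,y$ and their $\Sigma$-product $x\cdot y$ all lie in the single $\mathcal{H}$-class $\mathcal{H}^{\Sigma}_{x}$ of $\Sigma$.

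Next I would invoke the classical Green's theorem for plain semigroups applied to $\Sigma$: under the hypothesis above, $\mathcal{H}^{\Sigma}_{x}$ is a subgroup of $\Sigma$. In particular, this $\mathcal{H}$-class is closed under the multiplication of $\Sigma$, contains an identity $e$, and each of its elements has an inverse inside it. Since $\mathcal{H}^{\Sigma}_{x} = \mathcal{H}^{\Gamma}_{x} \subseteq S$, both $e$ and all inverses actually sit in $S$, not in the ``$\Gamma$-decorated'' part of $\Sigma$.

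The last step is to transport the group structure from $\Sigma$ to $S_{\gamma_{0}}$. For any $a,b \in \mathcal{H}^{\Gamma}_{x}$ the $\Sigma$-product $a \cdot b$ equals $a\gamma_{0}b$, which by the above lies again in $\mathcal{H}^{\Gamma}_{x}$; hence $\mathcal{H}^{\Gamma}_{x}$ is closed under the $\gamma_{0}$-operation of $S$, and the $\gamma_{0}$-operation coincides on this set with the ambient multiplication of the group $\mathcal{H}^{\Sigma}_{x}$. Therefore $\mathcal{H}^{\Gamma}_{x}$ is a subgroup of $S_{\gamma_{0}}$. The main potential obstacle I anticipate is the bookkeeping in the second step: one must be careful that the identity and the inverses produced by the classical Green's theorem in $\Sigma$ do not accidentally involve symbols from $\Gamma$, but this is ruled out once one knows that the whole subgroup is contained in $S$ via the identification $\mathcal{H}^{\Sigma}_{x} = \mathcal{H}^{\Gamma}_{x}$.
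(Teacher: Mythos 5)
Your proposal is correct and follows essentially the same route as the paper: identify $\mathcal{H}^{\Gamma}_{x}$ with $\mathcal{H}^{\Sigma}_{x}$, observe that $x\cdot y = x\gamma_{0}y$ in $\Sigma$ so the classical Green condition holds, apply the plain-semigroup Green's theorem, and transport the group structure back to $S_{\gamma_{0}}$. Your extra care in the final step (checking that the identity and inverses lie in $S$ and that the $\Sigma$-product restricted to $\mathcal{H}^{\Gamma}_{x}$ is the $\gamma_{0}$-operation) only makes explicit what the paper dismisses as ``obvious.''
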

\begin{proof}
For the particular $\gamma_{0}$ stated in the theorem, we construct the semigroup $\Sigma$ for which we know that $\mathcal{H}^{\Gamma}_{x}$ and $\mathcal{H}_{x}^{\Sigma}$ coincide. Now since $x,y, x \gamma_{0} y \in \mathcal{H}^{\Gamma}_{x}$, we have that $x,y, x \gamma_{0} y \in \mathcal{H}^{\Sigma}_{x}$. But $x \gamma_{0} y=xy$ in $\Sigma$, hence $\mathcal{H}^{\Sigma}_{x}$ satisfies the Green condition and then the Green's theorem for plain semigroups implies that $\mathcal{H}^{\Sigma}_{x}$ is a group. It is now obvious that $\mathcal{H}^{\Gamma}_{x}$ is a subgroup of $S_{\gamma_{0}}$.
\end{proof}
Next we will prove by using $\Sigma$ a generalization of the well known result of \cite{GS-I} which states that, for a given $\Gamma$ semigroup $S$, if $S_{\gamma_{0}}$ is a group for some $\gamma_{0} \in \Gamma$, then $S_{\gamma}$ is a group for every $\gamma \in \Gamma$.

\begin{theorem}
For a given $\Gamma$ semigroup $S$, if  for some $\gamma_{0} \in \Gamma$ $S_{\gamma_{0}}$ is a completely simple semigroup (without zero), then $S_{\gamma}$ is a completely simple semigroup (without zero) for every $\gamma \in \Gamma$.
\end{theorem}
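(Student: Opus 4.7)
The plan is to deduce complete simplicity of $S_\gamma$ from two ingredients: (a) $S_\gamma$ is completely regular (every element lies in a subgroup), and (b) $S_\gamma$ is simple. By Clifford's theorem a completely regular semigroup is a semilattice of completely simple semigroups; combined with simplicity this forces exactly one component, so $S_\gamma$ is then completely simple.

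For (a), I would apply Theorem \ref{GT} to each $x \in S$ with $y := x$. The Green condition to verify is $x \gamma x \in \mathcal{H}^\Gamma_x$. Since $S_{\gamma_0}$ is completely simple, $x$ sits in a subgroup $H^{\gamma_0}_x \subseteq S_{\gamma_0}$ with identity $e$ satisfying $e \gamma_0 x = x \gamma_0 e = x$. Using the $\Gamma$-axiom with $e$ as a pivot, one computes
\begin{align*}
x \gamma x &= (x \gamma_0 e) \gamma x = x \gamma_0 (e \gamma x) \in x \gamma_0 S, \\
x \gamma x &= x \gamma (e \gamma_0 x) = (x \gamma e) \gamma_0 x \in S \gamma_0 x,
\end{align*}
so $x \gamma x \in R^{\gamma_0}_x \cap L^{\gamma_0}_x = H^{\gamma_0}_x$. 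A trivial check that $(z)_r^{\gamma_0} \subseteq (z)_r^\Gamma$, and similarly on the left, gives $H^{\gamma_0}_x \subseteq \mathcal{H}^\Gamma_x$, hence $x \gamma x \in \mathcal{H}^\Gamma_x$. Theorem \ref{GT} then makes $\mathcal{H}^\Gamma_x$ a subgroup of $S_\gamma$, and since $x$ is arbitrary, $S_\gamma$ is completely regular.

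For (b), simplicity of $S_{\gamma_0}$ gives $S \gamma_0 a \gamma_0 S = S$ for every $a \in S$ (the idempotent $e$ of the group containing $a$ places $a = e \gamma_0 a \gamma_0 e$ in $S \gamma_0 a \gamma_0 S$), so in particular $S \Gamma a \Gamma S = S$. Given $b \in S$, write $b = u \alpha a \beta v$ with $u,v \in S$ and $\alpha,\beta \in \Gamma$. Letting $e_a$ denote the identity of the subgroup $\mathcal{H}^\Gamma_a$ of $S_\gamma$ furnished by (a), so that $e_a \gamma a = a \gamma e_a = a$, repeated use of the $\Gamma$-axiom yields
\begin{equation*}
b = u \alpha (e_a \gamma a \gamma e_a) \beta v = (u \alpha e_a) \gamma a \gamma (e_a \beta v) \in S \gamma a \gamma S.
\end{equation*}
Hence $S \gamma a \gamma S = S$ and $S_\gamma$ is simple.

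The main obstacle will be step (b), where several applications of the $\Gamma$-axiom must be tracked to move parentheses across products mixing $\alpha, \beta$ with $\gamma$. The conceptual mechanism throughout is the use of the group identity $e$ (whose existence is precisely the completely-simple hypothesis on $S_{\gamma_0}$) as a pivot that the $\Gamma$-axiom can slide across to convert $\gamma_0$-multiplications into $\gamma$-multiplications.
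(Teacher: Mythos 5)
Your argument is correct, and it takes a genuinely different route from the one in the paper. The paper's proof stays inside the universal semigroup: it shows that $\Sigma' = \Sigma \setminus \Gamma$ is completely simple, transfers complete regularity down to $\gamma S$ and then to $S_{\gamma}$, rules out a zero by hand, proves that a minimal left ideal of $S_{\gamma_{0}}$ remains a minimal left ideal of $S_{\gamma}$, and finally invokes Steinfeld's quasi-ideal machinery (Theorems 5.8, 5.12 and 5.14 of \cite{OSF}) to identify the kernel of $S_{\gamma}$ with $S_{\gamma}$ itself. You instead bypass $\Sigma$ entirely: you get complete regularity of $S_{\gamma}$ by feeding $y:=x$ into the Green theorem (Theorem \ref{GT}), you prove simplicity of $S_{\gamma}$ by a direct computation from $S\gamma_{0}a\gamma_{0}S = S$, and you close with the standard characterization of completely simple semigroups as the simple completely regular ones (via Clifford's semilattice decomposition). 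Both halves of your argument check out: the identity $e$ of the maximal subgroup $H^{\gamma_{0}}_{x}$ really does act as a pivot for the $\Gamma$-axiom, and the inclusions $\mathcal{R}^{\gamma_{0}}\subseteq\mathcal{R}^{\Gamma}$, $\mathcal{L}^{\gamma_{0}}\subseteq\mathcal{L}^{\Gamma}$ follow at once from $(z)_{r}^{\gamma_{0}}\subseteq(z)_{r}^{\Gamma}$ as you say. The one step you should make explicit is the claim $x\gamma x\in R^{\gamma_{0}}_{x}\cap L^{\gamma_{0}}_{x}$: your displayed computation only shows $x\gamma x\in x\gamma_{0}S\cap S\gamma_{0}x$, and you are silently using the (standard, e.g.\ from the Rees matrix representation) fact that in a completely simple semigroup $a\gamma_{0}S$ is exactly the $\mathcal{R}$-class of $a$ and $S\gamma_{0}a$ exactly its $\mathcal{L}$-class; a one-line citation fixes this. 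What your route buys is brevity and independence from $\Sigma$ and from the quasi-ideal theorems; what the paper's route buys is consonance with its programme of deriving $\Gamma$-semigroup results from $\Sigma$, plus intermediate facts of independent interest (that $\Sigma'$ is completely simple, and that minimal left ideals of $S_{\gamma_{0}}$ stay minimal in $S_{\gamma}$). Your proof also handles the ``without zero'' clause implicitly, since a simple semigroup with more than one element cannot contain a zero.
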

\begin{proof}
As in the previous theorem, we let $\Sigma$ be the universal semigroup constructed for $\gamma_{0}$. We proceed by first showing that $\Sigma'=\Sigma \setminus \Gamma$ is a completely simple semigroup without zero. To show that it is simple, we note that $\Sigma'$ is a disjoint union of subsemigroups of the form $S$, $\gamma S$, $S \gamma$, $\gamma S \gamma'$ for $\gamma, \gamma'$ varying in $\Gamma$. If $J$ is an ideal of $\Sigma'$ containing an $x \in S$, since $S_{\gamma_{0}}$ is a simple semigroup, then it follows easily that $S \subseteq J$, and that $\Sigma'=J$. If $J$ contains some $x \gamma$, then, for any $y \in S$, it contains $x \gamma y$ which lies in $S$, and than we proceed as before. The same argument applies if $J$ contains some $\gamma x$ or some $\gamma x \gamma'$. Now we show that $\Sigma'$ contains a primitive idempotent. Let $\varepsilon_{0}$ be a primitive idempotent of $S_{\gamma_{0}}$. We show that $\gamma_{0} \varepsilon_{0} \gamma_{0}$ is a primitive idempotent of $\Sigma'$. To do this, one should observe that any idempotent which is lower than $\gamma_{0} \varepsilon_{0} \gamma_{0}$ in the natural order must have the form $\gamma_{0} \varepsilon \gamma_{0}$ where $\varepsilon$ is an idempotent in $S_{\gamma_{0}}$. Since $\varepsilon_{0}$ is primitive, it follows that $\varepsilon=\varepsilon_{0}$, and therefore $\gamma_{0} \varepsilon \gamma_{0}=\gamma_{0} \varepsilon_{0} \gamma_{0}$ proving that the latter is primitive. The assertion that $\Sigma'$ does not have a zero is proved easily. Being completely simple, $\Sigma'$ is completely regular, therefore any element of $\Sigma'$ lies in a subgroup of $\Sigma'$. Let $\gamma \in \Gamma$ be ordinary and $\gamma x \in \gamma S$. There is a subgroup $G$ of $\Sigma'$ such that $\gamma x \in \gamma S \cap G$. It follows that the unit of $G$ must have the form $\gamma \varepsilon$ where $\varepsilon$ is an idempotent in $S_{\gamma}$. Taking this into account, one can easily show that any $g \in G$ must have the form $\gamma z$, therefore $G \subseteq \gamma S$. This shows that $\gamma S$ is completely regular. It is easy to show that $S_{\gamma}$ is completely regular too. We show that $S_{\gamma}$ does not contain a zero element. If it does contain a right zero $z$, then, with the convention that $\varepsilon_{z}^{\gamma}$ denotes the unit of the subgroup of $S_{\gamma}$ containing $z$, we have that for every $x \in S_{\gamma_{0}}$,
\begin{equation*}
x \gamma_{0} z=(x \gamma_{0} \varepsilon_{z}^{\gamma}) \gamma z=z,
\end{equation*}
which shows that $z$ is a right zero of $S_{\gamma_{0}}$. In the same way one proves that $S_{\gamma_{0}}$ has a left zero reaching a contradiction.
Next we prove that $S_{\gamma}$ contains a minimal left ideal. Let $L$ be a minimal left ideal of $S_{\gamma_{0}}$. We prove that $L$ is also a minimal left ideal of $S_{\gamma}$. For any $a \in L$ we denote by $\varepsilon_{a}^{\gamma_{0}}$ the unit of the subgroup of $S_{\gamma_{0}}$ containing $a$. For any $x \in S_{\gamma}$ we have that
\begin{equation*}
x \gamma a= x \gamma (\varepsilon_{a}^{\gamma_{0}} \gamma_{0} a)=(x \gamma \varepsilon_{a}^{\gamma_{0}}) \gamma_{0} a
\end{equation*}
which belongs to $L$ since $L$ is left ideal of $S_{\gamma_{0}}$. To show the minimality of $L$ we must show that it is generated by any of its elements. Let $a \in L$ be any element and $(a)_{\ell}^{S_{\gamma}}=\{x \gamma a: x \in S \}\cup \{a\}$ be the left ideal in $S_{\gamma}$ generated by $a$. Since for any $z \in S$, 
\begin{equation*}
z \gamma_{0}(x \gamma a)=(z \gamma_{0} x) \gamma a \in (a)_{\ell}^{S_{\gamma}} \text{ and } z \gamma_{0} a=(z \gamma_{0} \varepsilon_{a}^{\gamma}) \gamma a \in (a)_{\ell}^{S_{\gamma}}
\end{equation*}
then $(a)_{\ell}^{S_{\gamma}}$ is a left ideal of $S_{\gamma_{0}}$ contained in $L$. The simplicity of $L$ implies $L=(a)_{\ell}^{S_{\gamma}}$ proving that $L$ is simple in $S_{\gamma}$ too. Since each $\mathcal{L}$-class is a union of groups in $S_{\gamma}$, then any left ideal $L$ contains an idempotent, and as a result Theorem 5.8 of \cite{OSF} implies that $S_{\gamma}$ contains a minimal quasi-ideal. Theorem 5.12 \cite{OSF} implies that the union of all minimal quasi-ideals of $S_{\gamma}$ is the kernel and is a simple subsemigroup. But Theorem 5.14 \cite{OSF} shows that minimal quasi-ideals are precisely the maximal subgroups of $S_{\gamma}$, therefore the kernel coincides with $S_{\gamma}$ proving that $S_{\gamma}$ is simple. It follows that $S_{\gamma}$ is completely simple.
\end{proof}
We may now define a completely simple $\Gamma$ semigroup as a $\Gamma$ semigroup $S$ having the property that there exists $\gamma_{0}\in \Gamma$ such that $S_{\gamma_{0}}$ is a completely simple semigroup.

\end{document}